\theoremstyle{plain}
\newtheorem{definition}{Definition}[section]
\newtheorem{theorem}[definition]{Theorem}
\newtheorem*{theorem*}{Theorem}
\newtheorem{remark}[definition]{Remark}
\newtheorem*{remark*}{Remark}
\newtheorem*{sideremark*}{Side Remark}\newtheorem*{mt*}{Main Theorem}
\newtheorem*{claim*}{Claim}
\newtheorem*{q*}{Question}
\newtheorem{lemma}[definition]{Lemma}
\newtheorem{corollary}[definition]{Corollary}
\newtheorem*{corollary*}{Corollary}
\newtheorem*{proposition*}{Proposition}
\newcommand{\R}{\mathbb{R}}
\newcommand{\p}{\partial}
\newcommand{\e}{\epsilon}
\newcommand{\map}{\rightarrow}
\newcommand{\M}{\mathcal{M}}
\newcommand{\two}{{\rm II}}
\newcommand{\stwo}{{\mathbb{S}^2}}
\newcommand{\gbar}{{\overline{g}}}
\newcommand{\jt}{{\widetilde{J}}}
\newcommand{\A}{\mathcal{A}}
\def\XXint#1#2#3{{\setbox0=\hbox{$#1{#2#3}{\int}$ }
\vcenter{\hbox{$#2#3$ }}\kern-.6\wd0}}
\newcommand{\E}{{\mathbf{E}}}
\newcommand{\V}{{\mathbf{V}}}
\newcommand{\U}{{\mathbf{U}}}
\newcommand{\DD}{{\mathbf{D}}}
\newcommand{\J}{{\mathscr{J}}}
\numberwithin{equation}{section}
\numberwithin{figure}{section}
\title{The Weyl problem of  isometric immersions  revisited}
\author{Siran Li}
\address{Siran Li: Department of Mathematics, Rice University, MS 136
P.O. Box 1892, Houston, Texas, 77251, USA}
\email{\texttt{Siran.Li@rice.edu}}
\keywords{Isometric Immersions; Weyl Problem; Mean Curvature; Principal Curvature; $J$-Holomorphic Curves; Harnack Inequality}
\subjclass[2010]{35J60; 	53C23; 	53C42;	53C21; 53C20}
\date{\today}
\begin{document}

\maketitle

\begin{abstract}
We revisit the classical problem by Weyl, as well as its generalisations,  concerning the isometric immersions of $\stwo$ into simply-connected $3$-dimensional Riemannian manifolds with non-negative Gauss curvature. A sufficient condition is exhibited for the existence of global $C^{1,1}$-isometric immersions. Our developments are based on the framework \`{a} la Labourie \cite{l} of analysing isometric immersions via $J$-holomorphic curves. We obtain along the way a generalisation of a well-known theorem due to Heinz and Pogorelov.

\end{abstract}

\section{Introduction}\label{sec: Introduction}

\subsection{The Weyl problem}
We are concerned with the problem of the existence of isometric immersions of a surface $(\Sigma,g)$ homeomorphic to $\stwo$ with intrinsic curvature $\geq K_0$ into a $3$-dimensional simply-connected Riemannian manifold $(\M,\gbar)$ with sectional curvature $\leq K_0$; here $K_0$ is any finite real number. This classical problem was first investigated by Weyl \cite{w} in 1916, for $(\M,\gbar)$ being the Euclidean $3$-space and $K_0=0$. It has played a significant r\^{o}le in the development of geometric analysis and nonlinear PDEs. See \cite{hh} for a comprehensive exposition. 

For $\M=\R^3$, Lewy \cite{l} in 1938 solved the problem for real-analytic metric $g$ with strictly positive Gauss curvature. Nirenberg proved this result for $g\in C^4$ in his seminal 1953 paper \cite{n}. The case $g\in C^3$ was later settled by Heinz \cite{h} in 1962. Using different methods, Aleksandrov and Pogorelov \cite{a,p} obtained generalised solutions to the Weyl problem by considering limits of convex polyhedra.

For $\M=\R^3$ and metric $g$ with non-negative Gauss curvature, Guan--Li \cite{gl} proved the existence of $C^{1,1}$-isometric immersions for $g \in C^4$; also see Hong--Zuily \cite{hz}. The case of $\M=\mathbb{H}^3$ was settled by Pogorelov \cite{p2} for Gauss curvature $K >-1$, and by Lin--Wang \cite{lw} for $K\geq -1$; {\it cf.} also Chang--Xiao \cite{cx}. For the existence results of general ambient manifolds $(\M,\gbar)$ other than space forms, we refer to Pogorelov \cite{p} and  recent works by Guan--Lu \cite{glu} and Li--Wang \cite{li-wang}.

On the other hand,  Burago--Shefel' and Iaia \cite{bs,i} constructed interesting examples for a topological two-sphere $(\Sigma,g)$ whose metric is real-analytic and Gauss curvature is positive everywhere except at one point, but does \emph{not} admit $C^3$-global isometric embeddings into $\R^3$. Thus, there are obstructions to the existence of isometric embeddings/immersions of high regularity.

\subsection{The degenerate case}

In this paper, we  consider a smooth surface $(\Sigma,g)$ homeomorphic to $\stwo$ and a $3$-dimensional simply-connected manifold $(\M,\gbar)$ --- not required to be a space-form --- such that the Gauss curvature of $g$ is less than or equal to the sectional curvature of $\gbar$. We establish one sufficient condition for the existence of a $C^{1,1}$-isometric immersion $f:(\Sigma,g)\to (\M,\gbar)$, formulated in terms of  degeneracy/blowup rates for the principal curvatures of approximate families of elliptic embeddings.

In passing, we remark that in the nice paper \cite{hl}, Han--Lin obtained a sufficient and necessary condition for the existence of $C^\infty$-isometric embeddings for a family of metrics on $\mathbb{T}^2$ into $(\R^3,\delta)$. The method in \cite{hl} relies crucially on rigidity results for surfaces of sign-changing curvatures.

\subsection{The main theorem}
Our main result of this paper is the following:

\begin{theorem}\label{thm: main}
Let $(\Sigma,g)$ be a surface homeomorphic to $\stwo$ with intrinsic curvature greater than or equal to a real number $K_0$. Let $(\M,\gbar)$ be a $3$-dimensional simply-connected Riemannian manifold whose sectional curvature is less than or equal to $K_0$. Assume that $g,\gbar \in C^3$. Then one of the following   scenarios holds ---
\begin{enumerate}
\item
{\bf Either} there exists a degenerate-elliptic $C^{1,1}$-isometric immersion $f: (\Sigma,g) \to (\M,\gbar)$;
\item
{\bf or} there are a sequence of smooth metrics $\{g^\e\}$ converging to $g$ in the Lipschitz norm as $\e \map 0$, and a sequence $\{f^\e\}$ of smooth $\e$-elliptic isometric immersions of $g^\e$ into $(\M,\gbar)$, such that at any point where the smaller principal curvature $\kappa^\e_1 \to 0$,  the larger principal curvature $\kappa_2^\e$ must blow up at a rate no faster than $\mathcal{O}({1}\slash{\sqrt[3]{\kappa_1^\e}})$.
\end{enumerate}

Moreover, in Case (2) above we can bound  
\begin{equation}\label{bound--new}
\kappa_2^\e(z) \leq \frac{C_0}{\sqrt[3]{\kappa_1^\e(z)}}\qquad \text{ for all } z \in \Sigma \sim (\kappa_1^\e)^{-1}\{0\},
\end{equation}
where $C_0$ depends only on $\|g\|_{C^3}$ and $\|\gbar\|_{C^3}$. 
\end{theorem}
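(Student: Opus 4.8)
The plan is to reduce Theorem~\ref{thm: main} to one $\e$-uniform a priori estimate for the second fundamental form of strictly elliptic isometric immersions --- this being the ``generalisation of the theorem of Heinz and Pogorelov'' advertised in the abstract --- and to derive that estimate inside Labourie's $J$-holomorphic framework \cite{l}. Granting such an estimate for every member of an approximating family, the alternative in Theorem~\ref{thm: main} becomes simply whether or not that family has uniformly bounded second fundamental form.

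First I would fix smooth metrics $g^\e$ on $\Sigma$ with $g^\e\to g$ in $C^{0,1}$, $\|g^\e\|_{C^3}\le\|g\|_{C^3}+o(1)$, and pointwise curvature $K_{g^\e}>K_0$; since $\Sigma\cong\stwo$ this is achieved by a standard smoothing (for instance a short-time Ricci flow, which instantly renders the curvature strictly larger than $K_0$). The existence of a smooth $\e$-elliptic isometric immersion $f^\e:(\Sigma,g^\e)\to(\M,\gbar)$ is then available from the literature: for $\M$ a space form from Pogorelov \cite{p2} and Lin--Wang \cite{lw}, and for a general ambient manifold from Guan--Lu \cite{glu} and Li--Wang \cite{li-wang}.

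Next I would pass, following Labourie, to the $J$-holomorphic picture: the Gauss lift of the elliptic immersion $f^\e$ is a $J_{\gbar}$-holomorphic curve $\Phi^\e:\Sigma\to\mathcal Z$ into the bundle $\mathcal Z\to\M$ of oriented elliptic tangent $2$-planes, the $\e$-ellipticity keeping $\Phi^\e$ in the locus where $J_{\gbar}$ is defined. Here the ordered principal curvatures $\kappa_1^\e\le\kappa_2^\e$ of $f^\e$ are recovered from the vertical part of $d\Phi^\e$; the Gauss equation gives the pointwise identity $\kappa_1^\e\kappa_2^\e=K_{g^\e}-\bar K\big(df^\e(T\Sigma)\big)\in[0,C_1]$ with $C_1=C_1(\|g^\e\|_{C^2},\|\gbar\|_{C^0})$, and the Codazzi equations show that $\kappa_2^\e$ solves a quasilinear elliptic equation whose coefficients are controlled --- crucially, \emph{independently of the ellipticity constant} --- by $\|g^\e\|_{C^3}$ and $\|\gbar\|_{C^3}$; this is exactly where the hypothesis $g,\gbar\in C^3$, rather than $C^4$, is the natural one. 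The heart of the proof is then the pointwise bound
\begin{equation*}
\kappa_2^\e(z)\le C_0\,\big(\kappa_1^\e(z)\big)^{-1/3}\quad\text{whenever }\kappa_1^\e(z)>0,\qquad C_0=C_0\big(\|g\|_{C^3},\|\gbar\|_{C^3}\big),
\end{equation*}
valid for \emph{every} such $f^\e$ with $C_0$ independent of $\e$. I would establish it by a Pogorelov-type barrier/section argument carried out in the $J$-holomorphic setting, combined with a Harnack inequality for the elliptic equation satisfied by $\kappa_2^\e$, used to propagate a normalisation made at a near-maximum point of $\kappa_2^\e$ over a ball of controlled size; the third-order Taylor expansion of $g^\e$ at that point --- legitimate precisely because $g^\e\in C^3$ --- then forces a cubic relation among $\kappa_1^\e$, $\kappa_2^\e$ and $\|g^\e\|_{C^3}$, and balancing it yields the exponent $1/3$. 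This is the main obstacle: in the $C^4$-category the same scheme returns instead the bounded estimate $\kappa_2\le C$ of Guan--Li \cite{gl}, and the delicate points are to keep every constant independent of $\e$ (so the bound survives the degeneration $g^\e\to g$) and to extract the sharp cubic balance rather than the trivial consequence $\kappa_2^\e\le C_1/\kappa_1^\e$ of the Gauss equation.

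Finally I would close the dichotomy. If $\sup_\Sigma\kappa_2^\e$ stays bounded along a subsequence, then $\|\two^{f^\e}\|_{L^\infty}$ is uniformly bounded, hence $\{f^\e\}$ is bounded in $C^{1,1}$ and, after passing to a further subsequence, converges in $C^{1,\alpha}$ to a map $f\in C^{1,1}$ with $f^*\gbar=g$ and $\two^f\ge0$ a.e.; that is a degenerate-elliptic $C^{1,1}$-isometric immersion, which is Case~(1). If instead $\sup_\Sigma\kappa_2^\e\to\infty$, then the sequences $\{g^\e\}$ and $\{f^\e\}$, together with the displayed bound --- which, since $\{z:\kappa_1^\e(z)>0\}=\Sigma\setminus(\kappa_1^\e)^{-1}\{0\}$, is precisely \eqref{bound--new} --- realise Case~(2); and the stated dependence of $C_0$ on the $C^3$-norms of $g$ and $\gbar$ is the one produced in the previous step.
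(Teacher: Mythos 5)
Your overall skeleton matches the paper's: approximate $g$ by smooth strictly elliptic metrics $g^\e$, produce smooth $\e$-elliptic immersions $f^\e$, and dichotomise on whether the extrinsic curvature stays bounded. But the central estimate, and how you propose to obtain it, is not the paper's, and as stated it is not a proof.

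You assert a uniform pointwise bound $\kappa_2^\e \le C_0(\kappa_1^\e)^{-1/3}$ for \emph{every} $\e$-elliptic immersion, to be proved by ``a Pogorelov-type barrier/section argument'' together with a Harnack inequality for an equation satisfied by $\kappa_2^\e$, with the exponent $1/3$ emerging from a ``cubic balance'' in a third-order Taylor expansion. None of this is made precise, and it does not correspond to the actual mechanism. The paper works with the quantity $W = 1/H$ (not $\kappa_2$), starting from Labourie's first-order identity $-dW\circ\jt = W\beta + \pi_\Sigma^\#\omega(1-4k^2W^2)$, takes one more exterior derivative to obtain a divergence-form second-order elliptic PDE, and estimates the source term \emph{only on the region} $\DD_0 = \{W < \delta k\}$ where $1-4k^2W^2 > 1/2$. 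On that region the quasilinear Harnack inequality of Serrin--Trudinger gives $\min W \ge c_8 \max W$. This yields a genuine dichotomy (Theorem~\ref{thm: weyl}): either $\max_\Sigma H \le a_0$, or $Hk \le b_0$ wherever $H$ is large. The exponent $1/3$ is then a one-line algebraic consequence of $Hk\le b_0$, namely $\kappa_2^{3/2}\kappa_1^{1/2}\le(\kappa_1+\kappa_2)\sqrt{\kappa_1\kappa_2}\le 2b_0$; it has nothing to do with a Taylor expansion.

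Moreover your proposal omits an indispensable ingredient: the Harnack inequality alone leaves open the possibility $W\equiv 0$ (i.e.\ $H\equiv\infty$) on the whole surface. The paper rules this out by invoking the Wang--Yau / Shi--Tam integral estimate (Lemma~\ref{lem: wyst}) to bound $\int_\Sigma H$, which requires a genuinely extra idea of a quasi-local-mass nature. Without this your argument cannot close, because nothing in the Harnack step prevents the degenerate alternative. Finally, a smaller point: your appeal to Ricci flow to push $K_g$ strictly above $K_0$ is not obviously compatible with keeping $\|g^\e\|_{C^3}$ controlled in terms of $\|g\|_{C^3}$ only, whereas the conformal ansatz $g^\e=e^{2\e\lambda}g$ with $-\Delta_g\lambda=1$ on $K^{-1}\{0\}$ that the paper uses manifestly is.
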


Our developments are largely based on the framework laid down by Labourie \cite{l}, and our notations and nomenclatures closely  follow \cite{l}. In particular, we adopt the following
\begin{definition}
An isometric immersion $f:(\Sigma,g) \map (\M,\gbar)$ is said to be {\bf $\e$-elliptic} if the Gauss curvature of $f$ as in Eq.~\eqref{K, k} satisfy $K\geq \e>0$ everywhere on $\Sigma$. The immersion $f$ or the immersed surface $f(\Sigma)$ is {\bf elliptic} if $f$ is $\e$-elliptic for some $\e$. It is {\bf degenerate-elliptic} if $K\geq 0$ everywhere on $\Sigma$. Throughout, $K$ denotes the {\bf Gauss curvature of $f$}, namely the difference between the Gauss curvatures of $\Sigma$ and $f_\#(T\Sigma)$.
\end{definition}

We use $f_\#$ and $f^\#$ to denote the pushforward and pullback under $f$, respectively.

\subsection{Roadmap}
To illustrate the point of  Theorem~\ref{thm: main} and the strategy for its proof, the following discussions are presented.

In order to find an isometric immersion $(\Sigma,g)\to(\M,\gbar)$, one natural approach is to first approximate the metric $g$ by smooth metrics $\{g^\e\}$ with Gauss curvatures $K_\e\geq \e>0$. This can be done, for example, by a conformal change of metrics together with a mollification (see \cite{gl}; also see the proof of Theorem~\ref{thm: main} below). Then, by the existence results for strictly positively curved metrics ({\it cf.} Pogorelov \cite{p}, Labourie \cite{l}, Lin \cite{lin}, etc.), each $g^\e$ admits an smooth isometric immersion $f^\e$ into $(\M,\gbar)$. So it remains to investigate if one can pass to the limits for $\{f^\e\}$. 

Here enter the crucial insights by Labourie \cite{l}. Throughout this paragraph let us drop all the super-/subscripts $\e$ for notational simplicity. 
Let $f$ be an $\e$-elliptic isometric immersion. Its $1$-jet can be viewed as a pseudo-holomorphic map from $\Sigma$ into the fibre bundle 
\begin{equation*}
\begin{matrix}
\E:={\rm Isom}(T\Sigma,T\M)\\
\Big\downarrow \pi\\
\Sigma \times \M.
\end{matrix}
\end{equation*}
For $\xi \in \E$, the tangent space $T_\xi \E$ splits into $\V \oplus \U$, of which the important component is 
\begin{align}\label{Xi, V}
\V = \Big\{ \Xi(u,v):= \big(u,\xi(u),\,K\xi(v)\big):\,u,v\in T\Sigma \Big\},
\end{align}
equipped with the almost complex structure $J|\V: \Xi(u,v) \mapsto \Xi(v,-u)$. 
For an isometric immersion $f^\e:(\Sigma,g)\map(\M,\gbar)$ we take $\xi = df = f_\#$. Throughout $K$ denotes the (relative) Gauss curvature, {\it i.e.},
\begin{equation}\label{K, k}
K:= K(f_\#T\Sigma) - K(\Sigma) \equiv k^2.
\end{equation}
(We use the symbol $k$ in accordance with \cite{l}.) Then, for the $1$-jet of $f$,
\begin{equation*}
j_1f(\Sigma) \subset \V \subset \{\xi\in \E:\,k(\xi)>0\}.
\end{equation*}
In addition, $\V$ is \emph{calibrated} in the following sense: there is a $1$-form $\varphi$ defined on some neighbourhood of $j_1f(\Sigma)$, such that
\begin{equation*}
d\varphi\big(x,J|\V(x)\big)>0.
\end{equation*}
See \cite{l}, 2.10; also {\it cf.} Harvey--Lawson \cite{calibre} for  calibration. Thanks to the calibration $\varphi$, it follows from \emph{le lemme de Schwarz \`{a} la Gromov} (see \cite{g}; McDuff--Salamon \cite{ms}) that if $j_1f(\Sigma)$ is precompact, then $j_1f$ is smooth with uniformly bounded derivatives of all orders. Note that by \cite{l}, 2.6, the compactness in $\V$ is understood with respect to the following Hermitian metric $\mu$:
\begin{equation}\label{herm metric}
\mu\Big(\Xi(u_1,v_1),\Xi(u_2,v_2)\Big) := k g(u_1,u_2) + kg(v_1,v_2),
\end{equation}
where $g$ is the metric on $\Sigma$ in consideration. 

For all the above to hold, we need $(\M,\gbar)$ to be  simply-connected and to have sectional curvature $\leq K_0$. Nevertheless, it does not have to be a space form.

Let us apply the above arguments to $\{g^\e\}$ to get smooth isometric immersions $\{f^\e\}$. When the \emph{extrinsic geometries} --- namely, the mean curvatures $H_\e$ --- of $\{f^\e\}$ are uniformly bounded independent of $\e$,  by Arzel\`{a}--Ascoli's theorem one may pass to the limits to obtain a $C^{1,1}$-isometric immersion.

It remains to consider the case when the mean curvatures are not uniformly bounded. The blowup of mean curvatures only occurs  in the limiting process $\e\map 0$, {\it i.e.}, when the isometrically immersed surfaces $f^\e(\Sigma) \subset \M$ lose strict ellipticity. In this case, the Gauss curvatures tend to zero, while the mean curvatures blow up to infinity.

Our crucial observation is Theorem~\ref{thm: weyl} below:  in the above degenerate scenarios, the product of mean curvatures and the square root of the Gauss curvatures remains bounded:
\begin{equation}\label{H,K pairing}
H_\e \cdot \sqrt{K_\e} \leq b_0.
\end{equation}
The constant $b_0$ depends only on the supremum of $K_\e$ over $\Sigma$, which is bounded by the supremum of the Gauss curvature $K$ of $(\Sigma,g)$.

The bound~\eqref{H,K pairing} is proved by PDE methods. In Section~\ref{sec: PDE} we recall a first-order PDE, namely Eq.~\eqref{PDE}, on mean curvature $H$, or on the inverse of mean curvature $W:=H^{-1}$, derived by Labourie in 2.13 Lemme,  \cite{l}. Then, in Section~\ref{sec: harnack}, by taking another exterior derivative to Eq.~\eqref{PDE} we get a second-order elliptic equation of the divergence form, whose lower-order terms satisfy good estimates. Such estimates allow us to prove a Harnack inequality for $W$, \emph{on the set where $W$ is less than a uniform constant times $k$.} Most importantly, this Harnack estimate is independent of the  parameter $\e$ for the elliptic regularisation $\{f^\e\}$.

With the aforementioned preparations,  we complete the proof of Theorem~\ref{thm: main} in Section~\ref{sec: proof}. In particular, Eq.~\eqref{H,K pairing} can be readily translated into a  comparison result between degeneracy/blowup rates of the two principal curvatures.

Our arguments also lead to new results in the non-degenerate case, {\it i.e.}, when $k^2>0$ strictly. The aforementioned Harnack  estimates allow us to deduce the existence of isometric immersions for $g$ and $\gbar$ only in $C^3$. This generalises the classical results of Heinz \cite{h} and Pogorelov \cite{p} (also see F.-H. Lin \cite{lin}) for the ambient manifold $\M$ being a space form.

\subsection{A sufficient condition}
To conclude the introduction, we paraphrase Theorem~\ref{thm: main} into a criterion for the existence of degenerate-elliptic isometric immersions. 

\begin{corollary}
Let $(\Sigma,g)$ be a  surface homeomorphic to $\stwo$ with curvature greater than or equal to a real number $K_0$. Let $(\M,\gbar)$ be a $3$-dimensional simply-connected Riemannian manifold whose sectional curvature is less than or equal to $K_0$. Suppose $g,\gbar \in C^3$.  Assume that for any smooth isometric immersions $\{f^\e\}$ of the $\e$-elliptic regularisations $\{g^\e\}$ of $g$, 
\begin{itemize}
\item
{\bf either} the mean curvatures $H_\e$ are uniformly $C^0$-bounded;
\item
{\bf or} on the set where $H_\e$ blows up in the limit, there holds
\begin{equation*}
\kappa_2^\e \cdot \sqrt[3]{\kappa_1^\e} \longrightarrow +\infty\qquad \text{ as } \e\to 0.
\end{equation*}
Then $\{f^\e\}$ converges to a $C^{1,1}$-degenerate-elliptic isometric immersion $f: (\Sigma,g) \map (\M,\gbar)$.
\end{itemize}
\end{corollary}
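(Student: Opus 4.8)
The plan is to derive the Corollary directly from Theorem~\ref{thm: main} by a dichotomy argument, showing that the second alternative in the Corollary's hypothesis is incompatible with Case~(2) of the Theorem, so that only Case~(1) can occur. First I would fix, for each $\e>0$, the $\e$-elliptic regularisation $g^\e$ of $g$ (constructed via conformal change plus mollification, exactly as in the roadmap and in the proof of Theorem~\ref{thm: main}) together with an arbitrary smooth isometric immersion $f^\e$ of $g^\e$ into $(\M,\gbar)$, which exists by the Labourie--Pogorelov--Lin existence theory for strictly positively curved metrics. The first case of the Corollary's hypothesis --- uniform $C^0$-boundedness of the mean curvatures $H_\e$ --- is the ``easy branch'': with $\|H_\e\|_{C^0}$ uniformly bounded and $\|g^\e\|_{C^{1}}$ controlled (indeed $g^\e\to g$ in the Lipschitz norm), the second fundamental forms $\two_\e$ are uniformly bounded, hence $\{f^\e\}$ is uniformly bounded in $C^{1,1}$; by Arzel\`a--Ascoli a subsequence converges in $C^1$ to a limit $f$, and the uniform $W^{2,\infty}$-bounds make $f$ a $C^{1,1}$-isometric immersion of $g$, necessarily degenerate-elliptic since each $K_\e\geq\e>0$ passes to $K\geq 0$ in the limit.

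The core of the argument is the second branch. Suppose the mean curvatures are \emph{not} uniformly bounded; then we are in the situation addressed by Theorem~\ref{thm: main}, so one of its two scenarios holds. If scenario~(1) of the Theorem holds we are immediately done, as it produces precisely the desired degenerate-elliptic $C^{1,1}$-isometric immersion. So assume scenario~(2) holds: we obtain a sequence $\{g^\e\}\to g$ in Lipschitz norm and $\e$-elliptic immersions $\{f^\e\}$ for which the bound~\eqref{bound--new} is valid, namely $\kappa_2^\e(z)\leq C_0/\sqrt[3]{\kappa_1^\e(z)}$ on $\Sigma\setminus(\kappa_1^\e)^{-1}\{0\}$ with $C_0=C_0(\|g\|_{C^3},\|\gbar\|_{C^3})$ \emph{independent of $\e$}. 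Rearranging, this reads
\begin{equation*}
\kappa_2^\e(z)\cdot\sqrt[3]{\kappa_1^\e(z)}\leq C_0\qquad\text{ for all such }z.
\end{equation*}
On the other hand, the Corollary's hypothesis in this branch asserts that on the blowup set one has $\kappa_2^\e\cdot\sqrt[3]{\kappa_1^\e}\to+\infty$ as $\e\to 0$. These two statements are contradictory: for $\e$ small enough the left-hand side would exceed $C_0$ at some point of the blowup set, violating the uniform bound. Hence scenario~(2) of the Theorem is impossible under this hypothesis, forcing scenario~(1), which again yields the claimed $C^{1,1}$-degenerate-elliptic isometric immersion.

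The step I expect to require the most care is the precise matching between the ``blowup set'' appearing in the Corollary's hypothesis and the set $\Sigma\setminus(\kappa_1^\e)^{-1}\{0\}$ on which the bound~\eqref{bound--new} is asserted --- one must check that the points where $H_\e$ blows up in the limit are, for $\e$ small, points where $\kappa_1^\e$ is small but nonzero (so that the Theorem's bound applies there) and where the hypothesis's divergence $\kappa_2^\e\sqrt[3]{\kappa_1^\e}\to\infty$ is measured. This is a bookkeeping issue about localising the degeneracy: since $H_\e = \kappa_1^\e + \kappa_2^\e$ (up to the ambient normalisation) and $K_\e = \kappa_1^\e\kappa_2^\e$ relative to $\M$, blowup of $H_\e$ with $K_\e$ bounded forces $\kappa_1^\e\to 0$ and $\kappa_2^\e\to\infty$ there, which is exactly the regime of~\eqref{bound--new}. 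Once this identification is made cleanly, the contradiction is immediate and the Corollary follows; a concluding remark should note that the convergence of $\{f^\e\}$ to $f$ in Case~(1) is the $C^{1}$-subsequential convergence with $C^{1,1}$-limit guaranteed by the uniform higher-order jet bounds from \emph{le lemme de Schwarz \`a la Gromov}.
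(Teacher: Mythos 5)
Your derivation from Theorem~\ref{thm: main} is the right approach, and your key observation --- that the bound~\eqref{bound--new} of Case~(2) directly contradicts the hypothesis $\kappa_2^\e\cdot\sqrt[3]{\kappa_1^\e}\to+\infty$ --- is exactly the intended mechanism (the paper itself offers no separate proof, calling the Corollary a ``paraphrase'' of the Theorem). However, there is a small logical gap in the branch where $H_\e$ is not uniformly bounded. You invoke the Theorem's dichotomy and, in the sub-branch where scenario~(1) holds, declare the Corollary proved. But scenario~(1), as stated, only asserts the \emph{existence} of some degenerate-elliptic $C^{1,1}$-isometric immersion of $g$; it does not assert that the \emph{given} sequence $\{f^\e\}$ converges, which is what the Corollary actually claims.

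To close this, use the structure of the proof of Theorem~\ref{thm: main} rather than just its statement: there, scenario~(2) is precisely the regime in which $\{H_\e\}$ is not uniformly bounded, so ``$H_\e$ unbounded and scenario~(2) impossible'' is itself contradictory. Hence the Corollary's hypothesis forces $\{H_\e\}$ to be uniformly $C^0$-bounded, and the conclusion then follows from your ``easy branch'': uniform second-fundamental-form bounds, Arzel\`a--Ascoli, $C^1$-subsequential convergence to a $C^{1,1}$ limit, which is degenerate-elliptic since $K_\e\geq\e>0$ passes to $K\geq 0$. Your final concern about matching the blowup set with $\Sigma\setminus(\kappa_1^\e)^{-1}\{0\}$ resolves exactly as you anticipate: $\e$-ellipticity gives $K_\e\geq\e>0$ and hence $\kappa_1^\e>0$ everywhere, so~\eqref{bound--new} applies at every point; and since $K_\e=\kappa_1^\e\kappa_2^\e$ stays uniformly bounded, blowup of $H_\e$ indeed forces $\kappa_2^\e\to\infty$ and $\kappa_1^\e\to 0$ at such points, which is the regime where the contradiction bites.
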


\begin{remark}
The results of this paper are global in nature, as the local existence of isometric immersions for $(\Sigma,g)$ into $(\R^3,\delta)$ with $K_g \geq 0$ is known  for sufficiently regular metrics. See C.-S. Lin \cite{cslin} and Han \cite{han}.
\end{remark}

\section{A first-order PDE for mean curvature}\label{sec: PDE}

In this section $f:(\Sigma,g) \to (\M,\gbar)$ is an $\e$-elliptic isometric immersion. For notational convenience here we drop the super-/subscript $\e$. Let $J,\jt \in {\rm End}(T\Sigma)$ be the almost complex structure on $\Sigma$ with respect to the second and the first fundamental forms, respectively. By the  ellipticity of $f$, the second fundamental form $\two$ is indeed a metric.

We shall view the mean curvature $H$ of $f$ as  defined on a subset of the $1$-jet bundle:
\begin{equation*}
H: \DD \equiv j^1f\circ \gamma (\Delta) \subset \J^1(\Sigma,\M) \longrightarrow \R,
\end{equation*}
where $\gamma$ is a conformal map from the unit disc $\Delta$ to an open subset of $\Sigma$, and $j^1f$ is the $1$-jet of the $\e$-elliptic isometric immersion $f$, namely that 
\begin{equation*}
j_1f(x):=\big(f(x),\,f_\#|_x=d_xf\big).
\end{equation*}

For the principal curvatures $\kappa_1$ and $\kappa_2$ with respect to the isometric immersion $f$, we have $H=\frac{\kappa_1+\kappa_2}{2}$ and $K=k^2=\kappa_1\kappa_2$. Recall that  $K \geq \e$ by $\e$-ellipticity. Let $\omega \in \A^1(T\Sigma)$ be the connection form associated to the principal directions corresponding to $\kappa_1$ and $\kappa_2$. Then $\pi_\Sigma^\#\omega \in \A^1( \J^1(\Sigma,\M))$. Throughout, $\A^p(X)$ denotes  differential $p$-forms over  bundle $X$.

One ingenious observation by Labourie (\cite{l}, 2.12, 2.13, and 3.6) is that, thanks to the Gauss--Codazzi equations of isometric immersions, we can derive a first-order PDE for $H$:
\begin{equation}\label{PDE, H}
dH \circ J = H\beta + \pi_\Sigma^\# \omega (H^2-4k^2).
\end{equation}
Here $\pi_\Sigma$ is the projection from $T\E$ onto $T\Sigma$, and $\pi^\#_\Sigma$ is the pullback operator under this projection. $\beta \in  \A^1( \J^1(\Sigma,\M))$  depends only on $\pi$, $k$, and operators $L_1$, $L_2$, where
\begin{align*}
L_1\big(\Xi(u,v)\big) := k \Xi(0,u)
\end{align*}
and
\begin{align*}
L_2\big(\Xi(u,v), \Xi(w,q)\big) := k\Xi\big(0,-J_0 \overline{R}_u(w)\big),
\end{align*}
where the almost complex structure $J_0$ is given by
\begin{equation*}
J_0(u) := \nu \wedge u
\end{equation*}
for $\nu$ being the outward unit normal vectorfield along $f(\Sigma) \subset \M$ and $\wedge$ being the cross product of vectorfields, and $\overline{R}_u$ is given by
\begin{equation*}
\overline{R}_u(v):=R(u,v)\nu + J_0R(u,J_0v)\nu,
\end{equation*} 
with $R$ being the Riemann curvature tensor of $(\Sigma,g)$.

\section{Weyl's estimate via Harnack}\label{sec: harnack}

This section is dedicated to the proof of the following ``dichotomy theorem''. As before, $H$ and $k^2$ are the mean and Gauss curvatures, respectively. Again, for notational convenience we shall drop the super-/subscripts $\e$ in this section.

\begin{theorem}\label{thm: weyl}
Let $(\Sigma,g)$ be a surface homeomorphic to $\stwo$ with curvature strictly larger than a real number $K_0$. Let $(\M,\gbar)$ be a $3$-dimensional simply-connected Riemannian manifold whose sectional curvature is less than or equal to $K_0$. Let $f: (\Sigma,g) \to (\M,\gbar)$ be an $\e$-elliptic isometric immersion. Then the following holds:

There are two finite numbers $a_0$ and $b_0$, with $a_0$ depending only on $\{\|g\|_{C^3}, \|\gbar\|_{C^3}\}$ and $b_0$ depending only on $\{\|g\|_{C^2}, \|\gbar\|_{C^2}\}$, such that for any $a \geq a_0$  exactly one of the following holds:
\begin{itemize}
\item
$\max_{x\in \Sigma} H(x) \leq a$;
\item
$H(x) > a$ at some point but, simultaneously, $H(x)k(x) \leq b_0$.
\end{itemize}
\end{theorem}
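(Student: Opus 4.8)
The plan is to work with the inverse mean curvature $W := H^{-1}$, which by Labourie's first-order PDE \eqref{PDE, H} satisfies a clean first-order system, and to extract a second-order divergence-form elliptic equation for $W$ by differentiating once more. Concretely, rewriting \eqref{PDE, H} in terms of $W$ gives a relation of the schematic form $dW \circ J = -W\beta - \pi_\Sigma^\#\omega\,(1 - 4k^2 W^2)$ (after multiplying through by $-W^2$ and using $dH = -H^2\,dW$), so $dW\circ J$ is expressed through $W$, $k$, the curvature-dependent $1$-form $\beta$, and the connection form $\omega$. Applying $d$ to this identity and using $d(dW\circ J) = (\Delta_J W)\,\mathrm{dA} + (\text{lower order})$, together with $d\omega = $ (Gauss curvature terms) from the structure equations, I would obtain
\begin{equation*}
\mathrm{div}\big(A\,\nabla W\big) = \langle \mathbf{b}, \nabla W\rangle + c\,W + (\text{terms involving } k, dk),
\end{equation*}
where $A$ is uniformly elliptic (comparable to the second fundamental form, hence controlled by $k$ and $H$) and the coefficients $\mathbf{b}, c$ are controlled by $\|g\|_{C^3}$ and $\|\gbar\|_{C^3}$ \emph{on the region where $W \lesssim k$}, i.e.\ where $H \gtrsim 1/k$. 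The key point, exactly as flagged in the Roadmap, is that on this region the degeneracy of the ellipticity (the ratio of eigenvalues of $\two$, which is $\kappa_2/\kappa_1$) is \emph{compensated}: when $W$ is small compared to $k$ one has $\kappa_1 \kappa_2 = k^2$ and $\kappa_1 + \kappa_2 = 2H = 2/W \gtrsim 2/k$, forcing the smaller curvature $\kappa_1 \gtrsim k^2 W$ to not be too small relative to the scale set by $k$, which is precisely what is needed to keep the normalized equation for $W$ uniformly elliptic with bounded coefficients.

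Next I would run the De Giorgi--Nash--Moser machinery: on the open set $\Omega_a := \{x : H(x) > a\} = \{W < 1/a\}$, having fixed $a \geq a_0$ large enough that $1/a$ is below the threshold where the above ellipticity estimates kick in, I apply the Harnack inequality for nonnegative solutions of the divergence-form equation. Since $\Sigma$ is compact and homeomorphic to $\stwo$, I would cover $\Sigma$ (or rather the relevant portion) by finitely many conformal coordinate discs $\gamma(\Delta)$ — this is why Labourie's formulation places everything on the $1$-jet over the disc — and chain the local Harnack estimates. Crucially, the Harnack constant depends only on the ellipticity ratio and the coefficient bounds, hence only on $\|g\|_{C^3}, \|\gbar\|_{C^3}$, and is \emph{independent of $\e$} and of which immersion $f^\e$ we took. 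This yields: either $\Omega_a = \Sigma$ is empty (so $\max_\Sigma H \leq a$, the first alternative), or $\Omega_a \neq \emptyset$, in which case the Harnack inequality propagates control of $W$ from a point where $W$ is not too small across all of $\Sigma$, giving a lower bound $W \geq c_0 k$, equivalently $Hk \leq b_0$ — the second alternative. That the two alternatives are mutually exclusive for $a$ large is automatic once $b_0 < a \cdot (\sup_\Sigma k)$, i.e.\ once $a_0$ is chosen large relative to $b_0$ and $\sup k$ (and $\sup_\Sigma k$ is itself bounded by $\sup_\Sigma \sqrt{K_g}$, which is why $b_0$ only needs $C^2$-norms).

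I anticipate two main obstacles. The first, and the genuinely delicate one, is bookkeeping the ellipticity degeneration: I must show with care that on $\{W \lesssim k\}$ the rescaled second-order operator for $W$ is uniformly elliptic with $C^0$-bounds on lower-order coefficients that do not secretly depend on $\inf k$ or on $\e$ — this requires tracking exactly how $k$, $dk$, $\omega$, $d\omega$, and the operators $L_1, L_2, \overline{R}$ enter when $d$ is applied to \eqref{PDE, H}, and noticing the algebraic cancellations that make the "bad" powers of $1/k$ disappear precisely when $W/k$ is bounded. The second, more technical, issue is that the natural domain on which $W$ solves a clean equation is the $1$-jet lift $\DD = j^1 f \circ \gamma(\Delta)$, not $\Sigma$ directly, so I need the projection $\pi_\Sigma$ to identify the PDE downstairs on $\gamma(\Delta) \subset \Sigma$ and then verify that the conformal factor of $\gamma$ (controlled via the uniformization of $(\Sigma, g)$ and hence by $\|g\|_{C^2}$) does not degrade the Harnack constant. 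Everything else — the Moser iteration, the finite covering/chaining argument, and the final translation $Hk \leq b_0 \Leftrightarrow \kappa_2 \kappa_1^{-1} \lesssim k^{-2}\cdot(\text{bounded})$ used later for \eqref{bound--new} — is standard once these two points are in place.
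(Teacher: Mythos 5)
Your proposal correctly identifies the overall strategy shared with the paper: pass to $W = H^{-1}$, differentiate Labourie's first-order PDE to get a second-order divergence-form equation, and run a Harnack/Moser argument on the region where $W$ is small relative to $k$, keeping track that all constants depend only on $\|g\|_{C^3},\|\gbar\|_{C^3}$ and not on $\e$. However, there are two substantive gaps.

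First, and most concretely, you do not invoke any global integral bound on the mean curvature. The paper uses the Wang--Yau/Shi--Tam result (Lemma~\ref{lem: wyst}): comparing to the isometric embedding into the model $\mathbb{H}^3_{\kappa^2}$ gives $\int_\Sigma H \leq C_6$ with $C_6$ depending only on $\|g\|_{C^3}$. This is the step that rules out the scenario in which $W$ degenerates uniformly --- i.e.\ mean curvature blowing up \emph{everywhere} --- and provides an actual anchor for the Harnack chain. Your phrase ``propagates control of $W$ from a point where $W$ is not too small'' presupposes the existence of such a point, but when $\Omega_a = \Sigma$ (equivalently $H > a$ everywhere) the Harnack inequality alone yields only the relative bound $\min W \gtrsim \max W$, not an absolute lower bound; some external input, such as the Shi--Tam/Wang--Yau estimate, is required.

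Second, the claim that the second-order operator has leading part ``$A$ comparable to the second fundamental form'' which is ``uniformly elliptic'' on $\{W \lesssim k\}$ because the degeneration is ``compensated'' is incorrect as written. On $\DD_0$ one has $\kappa_1 \sim k^2 W$ and $\kappa_2 \sim W^{-1}$, so the ellipticity ratio of $\two$ is $\kappa_2/\kappa_1 \sim (kW)^{-2}$, which \emph{blows up} as $W \to 0$. The paper sidesteps this by using the complex structure $\jt$ associated to the \emph{first} fundamental form (not $J$ of $\two$) in Eq.~\eqref{PDE}, so that $-d(dW\circ\jt)$ is the ordinary Laplace--Beltrami of $g$: the leading part is uniformly elliptic for free, and all of the potentially degenerate dependence on $\two$ lands in the lower-order source $\mathcal{S}(z,W,dW)$, where the paper then verifies the structure bound $|\mathcal{S}| \leq C_2|dW|^2 + C_3|dW| + C_4|W|$ with $\e$-independent constants on $\DD_0$. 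Relatedly, you work on $\Omega_a = \{W < 1/a\}$ for a fixed threshold $1/a$, but the coefficient $1-4k^2W^2$ appearing in \eqref{PDE} is only bounded away from zero on a set of the form $\{W < \delta k\}$ with $\delta$ chosen via $\sup_\Sigma k$; replacing $\delta k$ by a constant $1/a$ would force $a$ to exceed $(\delta \inf k)^{-1} \sim \e^{-1/2}$, reintroducing the $\e$-dependence the whole argument must avoid.
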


\begin{remark}
We may view Theorem~\ref{thm: weyl} as a variant of the  \emph{Weyl's estimate}, which bounds the extrinsic geometry by the intrinsic geometry. 

Notably, in \cite{gl} Guan--Li obtained for the isometric embedding of $(\Sigma,g)$ into $(\R^3,\delta)$ that
\begin{equation*}
\max_\Sigma H \lesssim \sqrt{\max_\Sigma\Big(K^2 -\frac{3}{2}\Delta_g K \Big)},
\end{equation*}
which involves up to two derivatives of $K$, namely $C^4$-bounds for $g$. This leads to the proof for the existence of $C^{1,1}$-isometric embeddings of $(\Sigma,g)$ into $(\R^3,\delta)$ for $g \in C^4$. Here we need only up to one derivative of $K$, but we cannot get uniform bounds independent of $\e$. 

Also see Lu \cite{lu}, Theorem~1.3, for which only $C^{2,{\rm Dini}}$-bounds on $g$ are needed, subject to the assumption $K>2K_0$ where $(\M,\gbar)$ is the space form of constant curvature $K_0 \in \R$. 
\end{remark}

\begin{proof}
	 We consider the inverse of the mean curvature, namely
\begin{equation*}
W:=\frac{1}{H}.
\end{equation*}
Our goal is to prove that $$\min_\Sigma W \geq c>0$$ for some constant $c$ depending on $K$ and $dK$ only. This is achieved by establishing a Harnack estimate for $W$.

Dividing by $H^2$ on both sides of Eq.~\eqref{PDE, H}, we find that $W$ satisfies a first-order PDE:
\begin{equation}\label{PDE}
-d W \circ \jt = W\beta + \pi_\Sigma^\#\omega (1-4k^2W^2).
\end{equation}
See Labourie \cite{l}, 3.6 Proposition. 
The $1$-form $\beta$ is globally defined on the $1$-jet bundle, and it depends only on $\pi$, $k$, and the Riemann curvature of $\M$.

We shall consider $$\DD_0 :=\Big\{z \in \DD: W(z) < \delta k \Big\},$$ and our objective is to derive a lower bound for $W$ on $\DD_0$. Choose
\begin{equation*}
\delta := \frac{1}{8 \,(\sup_\Sigma k)^4}
\end{equation*}
so that $1-4k^2W^2 >1/2$. Note that $\delta$ is strictly positive by Gauss--Bonnet, as $(\Sigma,g)$ is a non-negatively curved topological $\stwo$.

Taking the exterior differential to Eq.\eqref{PDE},  one obtains
\begin{align}\label{2nd order PDE}
-d(dW \circ \jt)&= Wd\beta + dW \wedge \beta  + \pi^\#_\Sigma \Omega (1-4k^2W^2)  \nonumber\\
&\qquad + 8k^2W \pi^\#_\Sigma\omega  \wedge dk +  8kW^2 \pi^\#_\Sigma\omega  \wedge dW \nonumber\\
&=: \mathcal{S}(z, W, dW),
\end{align}
where $\Omega$ is the curvature form of $(\Sigma,g)$. Eq.~\eqref{2nd order PDE} is an identity of $2$-forms; see \cite{l}, p.409 Eq.~(2). 

By the definition for the  metric on subbundle $\V \subset \E$ (see Eq.~\eqref{herm metric} above), we have
\begin{eqnarray*}
\big|\pi^\#_\Sigma \Omega\big| \leq \sqrt{k^2 ( |\Omega|_\Sigma^2 + |J\Omega|_\Sigma^2 )}.
\end{eqnarray*}
Here the length $|\bullet|_\Sigma$ and the almost complex structure $J$ on $\Sigma$ are both computed \emph{with respect to the metric $\two$}. Thus we have
\begin{equation}\label{omega, Omega est}
\big|\pi^\#_\Sigma \Omega\big| \leq C_1 {k} W 
\end{equation}
for a uniform constant $C_1$ depending only on the $C^2$-norm of $g$ and $\gbar$.

We now substitute into Eq.~\eqref{2nd order PDE} the following relation 
\begin{equation*}
\pi^\#_\Sigma \omega = \frac{dW \circ \jt - W\beta}{1-4k^2W^2}.
\end{equation*}	
Again we need $1-4k^2W^2 >1/2$ on $\DD_0$ to make sense of this formula. Thus one obtains
\begin{align*}
\mathcal{S}(z, W, dW) &= Wd\beta + dW \wedge \beta + \pi^\#_\Sigma\Omega(1-4k^2W^2)  \\
&\qquad + \frac{8k^2W}{1-4k^2W^2} (dW \circ \jt-W\beta)\wedge dk \\
&\qquad+ \frac{8kW^2}{1-4k^2W^2}(dW \circ \jt-W\beta)\wedge dW.
\end{align*}
The above can be estimated pointwise:
\begin{align*}
\big|\mathcal{S}(z, W, dW) \big| &\leq |d\beta| W + |\beta| |dW| + C_1 {k} W + 4C_1k^{3}W^3\\
&\qquad + 16k^2 |dk| W  \big(|dW|+|\beta|W\big)\\
&\qquad + 16kW^2|dW|\big(|dW|+|\beta|W\big),
\end{align*}
thanks to Eq.~\eqref{omega, Omega est} and that $1-4k^2W^2 > 1/2$ on $\DD_0$.

As $0 \leq W < \delta k$ on $\DD_0$, we further bound at each $z \in \DD_0$ that
\begin{align*}
\big|\mathcal{S}(z, W, dW) \big| &\leq  |d\beta| W + |\beta| |dW| + C_1 k W + 4C_1 \delta^2 k^{5}W\\
&\qquad + 16\delta k^3|dk||dW| + 16 |\beta| \delta k^3 |dk|W \\
&\qquad + 16 \delta^2 k^3 |dW|^2 + 16\delta^3|\beta|k^4|dW|.
\end{align*}
Denote by 
\begin{eqnarray*}
&&\Lambda := \sup_{z\in\Sigma} k(z), \qquad
\Lambda_1 :=  \sup_{z\in\Sigma} k(z)|dk(z)|;\\
&&B := \sup_{z\in\Sigma} |\beta(z)|, \qquad
B_1 :=  \sup_{z\in\Sigma} |d\beta(z)|. 
\end{eqnarray*}
Thus on $\DD_0$ we have
\begin{equation}\label{structure, harnack}
\big|\mathcal{S}(z, W, dW) \big| \leq C_2|dW|^2 + C_3|dW| + C_4|W|,
\end{equation}
with the constants
\begin{eqnarray*}
&& C_2 = 16\delta^2 \Lambda^3,\\
&& C_3 = B+16\delta \Lambda^2 \Lambda_1 +16\delta^3 B \Lambda^4,\\
&& C_4 = B_1 + C_1{\Lambda} + 4C_1\delta^2 \Lambda^{5} + 16B\delta\Lambda^2\Lambda_1.
\end{eqnarray*}

To summarise, we have a second-order elliptic PDE of the divergence form $-d(dW \circ \jt) = \mathcal{S}$, namely Eq.~\eqref{PDE}. The left-hand side is simply the Laplace-Beltrami of $W$, due to the presence of the almost complex structure $\jt$. Thus, the bound~\eqref{structure, harnack} on the source term permits the application of the classical Harnack estimate; {\it cf.} Trudinger \cite{t}, Theorem~1.1 and Serrin \cite{s}. Indeed, for any cube $Q$ of edge length $3R$ inside the open set $\DD_0$, we have
\begin{equation}\label{harnack, estimate}
\min_Q W \geq C_5^{-1} \max_Q W,
\end{equation}
where $C_5$ depends on  $\mu$ and  $R\mu$;
\begin{equation*}
\mu = \sup_{\DD_0} \,(C_2+C_3+C_4).
\end{equation*}
Therefore, on $\DD_0$ we either  have $W\equiv 0$ constantly, or $W$ is non-vanishing everywhere.
\begin{itemize}
\item
In the former case, by the continuity of $W$ we know that the complement of $\DD_0$ must be empty; that is, $W \equiv 0$ on the whole domain $\DD$. But this is impossible in view of the following result due to Wang--Yau \cite{wy} and Shi--Tam \cite{st} (also see Lu \cite{lu}, Lemma~2.2):

\begin{lemma}\label{lem: wyst}
Let $(\Omega,\gbar)$ be a $3$-dimensional Riemannian manifold with scalar curvature $\overline{\rm scal}\geq -6 \kappa^2$ for some $\kappa>0$. Assume that $(\Sigma=\p\Omega,g)$ is a topological sphere with scalar curvature ${\rm scal}>-2\kappa^2$ and positive mean curvature $H$. Then $\Sigma$ can be isometrically embedded into $\mathbb{H}^3_{\kappa^2}$, the space form with constant negative sectional curvature $-\kappa^2$. 

Moreover, there holds
\begin{equation*}
\int_{\Sigma} (H_0-H) \cosh(\kappa r)\geq 0;
\end{equation*}
$r$ is the distance function on $\mathbb{H}^3_{\kappa^2}$  from the origin, and $H_0$ is the mean curvature of $\mathbb{H}^3_{\kappa^2}$.
\end{lemma}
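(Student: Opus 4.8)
The plan is to treat the two assertions separately: first the existence of an isometric embedding $\Phi:(\Sigma,g)\hookrightarrow\mathbb{H}^3_{\kappa^2}$, and then the integral inequality, which I would prove as a Shi--Tam--type quasi-local mass positivity statement with $\mathbb{H}^3_{\kappa^2}$ playing the role of the reference space. For the embedding, note that the hypothesis ${\rm scal}>-2\kappa^2$ on the topological sphere $(\Sigma,g)$ is exactly the statement that the Gauss curvature $K_g=\tfrac12{\rm scal}$ strictly exceeds the sectional curvature $-\kappa^2$ of $\mathbb{H}^3_{\kappa^2}$. Hence the resolution of the Weyl problem in hyperbolic space — Pogorelov \cite{p2} under the strict inequality, with the borderline case later settled by Lin--Wang \cite{lw} — yields a smooth isometric embedding $\Phi$ whose image bounds a convex body; in particular the mean curvature $H_0$ of $\Phi(\Sigma)$ with respect to the outward normal is strictly positive, and $\Phi$ is unique up to an ambient isometry by Pogorelov's rigidity theorem. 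Fix such a $\Phi$, choose an origin $o\in\mathbb{H}^3_{\kappa^2}$, and set $r=\mathrm{dist}(o,\cdot)$ and $V:=\cosh(\kappa r)$.

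The key structural fact is that $V$ is a \emph{static potential}: on $(\mathbb{H}^3_{\kappa^2},\bar g_0)$ one has $\nabla^2 V=\kappa^2 V\,\bar g_0$, $\Delta V=3\kappa^2 V$, and $\mathrm{Ric}_{\bar g_0}=-2\kappa^2\bar g_0$, so $(\Delta V)\bar g_0-\nabla^2 V+V\,\mathrm{Ric}_{\bar g_0}\equiv 0$. I would foliate the exterior region $\mathbb{H}^3_{\kappa^2}\setminus\Phi(\Omega)$ by the surfaces $\Sigma_t$ obtained by flowing $\Phi(\Sigma)$ along its outward unit normal for geodesic time $t\geq 0$, writing $\bar g_0=dt^2+g_t$ there. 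On the abstract cylinder $\Sigma\times[0,\infty)$ one then looks for a quasi-spherical metric $\bar g_+=w^2\,dt^2+g_t$ and imposes the scalar-curvature equation ${\rm scal}_{\bar g_+}=-6\kappa^2$; by the Gauss equation and the evolution of $g_t$ this reduces to a first-order parabolic equation for $w>0$ of the Shi--Tam form, and I prescribe the initial value $w|_{t=0}=H_0/H$ on $\Sigma$ (here positivity of $H$, part of the hypothesis, is used).

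Following Shi--Tam \cite{st} and its hyperbolic adaptation in Wang--Yau \cite{wy}, this parabolic problem has a global solution with $w\to1$ and $\bar g_+$ asymptotically hyperbolic of scalar curvature $-6\kappa^2$. Introduce the Brown--York/Wang--Yau quasi-local mass of the slice $\Sigma_t$ relative to the potential $V$,
\begin{equation*}
\mathfrak m(t):=\frac{1}{8\pi}\int_{\Sigma_t}\big(H_0^{(t)}-H^{(t)}\big)\,V\,dA_t ,
\end{equation*}
where $H_0^{(t)}$ is the mean curvature of $\Sigma_t$ inside $\mathbb{H}^3_{\kappa^2}$ and $H^{(t)}$ that of $\Sigma_t\subset(\Sigma\times[0,\infty),\bar g_+)$. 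The heart of the argument is the monotonicity $\tfrac{d}{dt}\mathfrak m(t)\leq 0$, obtained by differentiating $\mathfrak m$ and combining the constraint ${\rm scal}_{\bar g_+}=-6\kappa^2$, the static equation for $V$, the divergence theorem, and a Cauchy--Schwarz inequality for the second fundamental form of $\Sigma_t$, exactly as in \cite{st}. Since $w\to1$ and the slices become umbilic at infinity, $\lim_{t\to\infty}\mathfrak m(t)$ equals the mass-aspect integral $m_{\mathrm{AH}}$ of the asymptotically hyperbolic end.

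Finally I would glue $(\Omega,\bar g)$ to $(\Sigma\times[0,\infty),\bar g_+)$ along $\Sigma\cong\Sigma\times\{0\}$: the resulting metric is $C^0$ and Lipschitz, its scalar curvature is $\geq-6\kappa^2$ on each piece (by hypothesis on $\Omega$, by construction on the cylinder), and the choice $w|_{t=0}=H_0/H$ makes the mean curvature of the gluing hypersurface agree from the two sides, so there is no negative corner contribution. The hyperbolic positive mass theorem for asymptotically hyperbolic manifolds with corners (cf. \cite{wy,st} and Lu \cite{lu}, Lemma~2.2) then gives $m_{\mathrm{AH}}\geq 0$, whence $\mathfrak m(0)\geq\lim_{t\to\infty}\mathfrak m(t)=m_{\mathrm{AH}}\geq 0$, which is precisely
\begin{equation*}
\int_{\Sigma}\big(H_0-H\big)\cosh(\kappa r)\,dA\geq 0 .
\end{equation*}
I expect the main obstacle to be the third step: the global existence and $w\to1$ asymptotics for the Shi--Tam parabolic equation in the hyperbolic setting, and above all the monotonicity $\tfrac{d}{dt}\mathfrak m(t)\leq 0$, whose proof requires carefully exploiting the static vacuum structure of $(\mathbb{H}^3_{\kappa^2},V)$; the positive-mass-with-corners input is by now standard but also nontrivial.
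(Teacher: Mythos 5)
The paper does not prove this lemma at all: it is imported verbatim as a known result, attributed to Wang--Yau \cite{wy} and Shi--Tam \cite{st}, with the precise formulation taken from Lu \cite{lu}, Lemma~2.2. So there is no ``paper's own proof'' to compare against; what you have done is a blind reconstruction of the argument behind the cited result.

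That reconstruction is, at the level of an outline, faithful and correct. You correctly read the hypothesis ${\rm scal}>-2\kappa^2$ as $K_g>-\kappa^2$ and invoke the hyperbolic Weyl problem (Pogorelov, with Lin--Wang for the borderline case) for the existence of the convex embedding $\Phi$; you correctly identify $V=\cosh(\kappa r)$ as the static potential satisfying $(\Delta V)\bar g_0-\nabla^2 V+V\,{\rm Ric}_{\bar g_0}=0$; and the quasi-spherical ansatz $\bar g_+=w^2 dt^2+g_t$ with ${\rm scal}_{\bar g_+}=-6\kappa^2$, the initial condition $w|_{t=0}=H_0/H$, the monotonicity of $\mathfrak m(t)$, the gluing with no corner defect, and the asymptotically hyperbolic positive mass theorem together constitute precisely the Shi--Tam/Wang--Yau mechanism behind the inequality $\int_\Sigma (H_0-H)\cosh(\kappa r)\geq 0$. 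Two small remarks of care. First, the cited \cite{st} is already the 2007 paper with hyperbolic reference, so the labour is not really ``Shi--Tam for the flat case plus Wang--Yau's adaptation'' but rather two parallel treatments of the hyperbolic case; your phrasing mildly understates \cite{st}. Second, the steps you yourself flag as the ``main obstacle'' --- global existence of the parabolic equation for $w$, the $w\to1$ decay at the rate needed for the mass aspect to be well defined, the full monotonicity computation using the static equation, and the corner positive mass theorem in the asymptotically hyperbolic setting --- are genuinely the bulk of the work, and in a self-contained write-up each would have to be carried out; but since the paper deliberately treats the lemma as a black box, citing it (as you do, to \cite{wy}, \cite{st}, and \cite{lu}) is exactly the intended use, and your sketch gives the correct road map for a reader who wants the proof.
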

It follows that the total curvature is uniformly bounded:
\begin{align*}
\int_\Sigma H \leq \int_\Sigma H\cosh(\kappa r) \leq \int_\Sigma H_0 \cosh(\kappa r) \leq C_6,
\end{align*}
where $C_6$ depends only on $\kappa$ and $\|g\|_{C^2}$. In addition, $\kappa$ can be chosen to depend only on $\|g\|_{C^3}$. This rules out the possibility that $H$ blows up everywhere on $\Sigma$; or, equivalently, that $W \equiv 0$ on the entire $\DD$.

\item
In the latter case, the Harnack estimate~\eqref{harnack, estimate} implies that $W \geq c_7 > 0$ on each $(3R)$-cube $Q \subset \DD_0$, where $c_7$ depends only on $\mu$ and $R$.

Fixing a small $R$ once and for all and applying a standard covering argument, we get 
\begin{equation*}
\min_{\DD_0} W \geq c_8 \max_{\DD_0} W
\end{equation*}
for some $c_8>0$ depending only on $\mu$.

On the other hand, by construction we have $W \geq \delta k$ on $\DD \sim \DD_0$.
\end{itemize}

To complete the proof, note that $\mu$ depends only on $B$, $B_1$, $\Lambda$, $\Lambda_1$, $C_1$, and $\delta$. Here $\Lambda_1$ and $B_1$ altogether depend on up to three derivatives of $g$ and $\gbar$. On the other hand, $C_1$, $B$, $\Lambda$, and $\delta$ depend on up to two derivatives of $g$ and $\gbar$. Furthermore, none of the above parameters depends on $\e$, {\it i.e.}, the lower bound for $k^2$. 

The assertion follows once we take $a_0=(c_8)^{-1}$ and $b_0=\delta^{-1}$. \end{proof}

\section{Proof of Theorem~\ref{thm: main}}\label{sec: proof}

In this section we 
deduce Theorem~\ref{thm: main} from the dichotomy Theorem~\ref{thm: weyl}.

\begin{proof}[Proof of Theorem~\ref{thm: main}]

First of all, as in Guan--Li \cite{gl}, let us approximate $g$ by a sequence of $C^\infty$-metrics $\{g^\e\}$, which both converges to $g$ in the $C^3$-topology and possesses $\e$-elliptic isometric immersions. One may take a conformal change of metrics $${g}^\e := e^{2\e\lambda}g$$ for a smooth scalarfield $\lambda$ on $\Sigma$. Indeed, as the Gauss curvature $\tilde{K}_\e$ for $(\Sigma,g^\e)$ satisfies
\begin{equation*}
-\e\Delta_g \lambda + \tilde{K} = \tilde{K}_\e e^{2\e\lambda},
\end{equation*}
where $\tilde{K}$ is the Gauss curvature of $g$, by imposing $-\Delta_g\lambda=1$ on $K^{-1}\{0\}$ we can ensure the strict inequality $\tilde{K}_\e > K_0$ everywhere on $\Sigma$. By Th\'{e}or\`{e}me A in Labourie \cite{l}, for each such $g_\e$ there exists an $\e$-elliptic isometric immersion $f_\e$ into $(\M,\bar{g})$. Furthermore, thanks to le lemme de Schwarz \`{a} la Gromov (\cite{l}, 1.2),  $f_\e$ is smooth for each $\e>0$.

In the sequel, let us denote by $H_\e$, $K_\e$, $\kappa_1^\e$, and  $\kappa_2^\e$ the mean curvature, the Gauss curvature, the smaller principal curvature, and the larger principal curvature for $f_\e$, respectively. Note that $K_\e$ is different from $\tilde{K}_\e$ in the last paragraph. All these quantities are non-negative, in view of the $\e$-ellipticity of $f_\e$. One also writes $k_\e:=\sqrt{K_\e}$. Note that $\{K_\e\}$ is uniformly bounded in $C^0$.

In the first case, assume that   $\{H_\e\}$ is uniformly bounded in $C^0$ by a constant depending only on the $C^3$-norms of $g$ and $\gbar$. We can pass to the limits to obtain a $C^{1,1}$-isometric immersion that is degenerate-elliptic, thanks to the Arzel\`{a}--Ascoli theorem.  

Now, let us suppose that $H$ blows up somewhere but not everywhere. For further developments, it is crucial to note that all the estimates in Theorem~\ref{thm: weyl} --- in particular, the constant $a_0$ and $b_0$ --- are independent of $\e$. So this theorem holds verbatim after replacing the data $\{f,g,H,K\equiv k^2,\kappa_1,\kappa_2\}$ by $\{f_\e, g^\e, H_\e, K_\e\equiv (k_\e)^2, \kappa_1^\e, \kappa_2^\e\}$, respectively.

Consider a point $z \in \Sigma$ such that $K_\e(z) \to 0$ as $\e \map 0$. The smaller eigenvalue $\kappa_1^\e(z)$ must tend to zero. By Theorem~\ref{thm: weyl} we have
\begin{equation*}
H_\e(z)k_\e(z) \leq b_0,
\end{equation*}
where $b_0$ is independent of $\e$. That is, $$\Big(\kappa_1^\e(z)+\kappa_2^\e(z)\Big)\sqrt{\kappa_1^\e(z)\kappa_2^\e(z)} \leq 2 b_0.$$ It follows that 
\begin{equation*}
\kappa_2^\e(z) \leq \Bigg(\frac{4(b_0)^2}{\kappa_1^\e(z)}\Bigg)^{\frac{1}{3}}
\end{equation*}
whenever $\kappa_1^\e(z) \neq 0$. Here we can choose $b_0=9(\sup_\Sigma k)^4\equiv 9(\sup_\Sigma K)^2$ for sufficiently small $\e$, thanks to Theorem~\ref{thm: weyl}.

On the other hand, if $z$ is not a point of degeneracy for the Gauss curvature, then  $\{H_\e(z)\}$ is uniformly bounded from the   above by $a_0$, which depends only on $\|g\|_{C^3}$ and $\|\bar{g}\|_{C^3}$ as in Theorem~\ref{thm: weyl}. Again, $a_0$ is independent of $z$. Then, utilising the na\"{i}ve bound
\begin{equation*}
H_\e \geq \sqrt{\frac{K_\e}{2}} = \frac{k_\e}{\sqrt{2}},
\end{equation*} 
it is straightforward to see that
\begin{equation*}
\kappa_2^\e(z) \leq \Bigg(\frac{{2H_\e(z) K_\e(z)}}{{\kappa^\e_1(z)}}\Bigg)^{\frac{1}{3}} \leq \frac{\sqrt[3]{4} a_0}{\sqrt[3]{\kappa_1^\e(z)}}.
\end{equation*}

The proof is now complete.  \end{proof}

\section{Concluding Remarks}\label{sec: rem}

A resolution for the classical Weyl problem  follows directly from the proof of Theorem~\ref{thm: weyl}. The essential ingredients of the proof are already present in Labourie \cite{l}.

\begin{corollary}\label{cor}
Let $(\Sigma,g)$ be a homeomorphic copy of $\stwo$ with Gauss curvature strictly greater than $K_0 \in \R$. Let $(\M,\gbar)$ be a simply-connected $3$-dimensional Riemannian manifold with sectional curvature strictly less than $K_0$. Assume that $g,\gbar \in C^3$. Then $(\Sigma,g)$ can be isometrically immersed in $(\M,\gbar)$ as a $C^3$-surface.
\end{corollary}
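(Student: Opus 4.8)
The plan is to combine the elliptic regularisation scheme from the proof of Theorem~\ref{thm: main} with the first case of the dichotomy in Theorem~\ref{thm: weyl}, noting that under the strict curvature hypotheses the metric $g$ itself is already $\e_0$-elliptic for some $\e_0 > 0$, so no passage to the limit is needed.

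First I would observe that, since the Gauss curvature of $g$ is strictly greater than $K_0$ and the sectional curvature of $(\M,\gbar)$ is strictly less than $K_0$, the relative Gauss curvature $K$ of any isometric immersion $f : (\Sigma,g) \to (\M,\gbar)$ satisfies $K \geq \e_0 > 0$ for a uniform $\e_0$ depending only on these two curvature bounds; in particular any such $f$ is automatically $\e_0$-elliptic. Next, by Th\'eor\`eme~A of Labourie \cite{l} — which requires only that $(\M,\gbar)$ be simply-connected with sectional curvature $\leq K_0$ and that $g$ have curvature $\geq K_0$, here met with strict inequalities — there exists an $\e_0$-elliptic isometric immersion $f : (\Sigma,g) \to (\M,\gbar)$. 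By le lemme de Schwarz \`a la Gromov (\cite{l}, 1.2), together with the calibration property of $\V$ recalled in the introduction, the $1$-jet $j_1 f$ has precompact image and hence $f$ is smooth — in fact $C^\infty$ — provided the extrinsic geometry is controlled.

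The key step is then to invoke Theorem~\ref{thm: weyl} for this single immersion $f$: with $a = a_0$, either $\max_\Sigma H \leq a_0$, or $H$ exceeds $a_0$ somewhere while $Hk \leq b_0$ everywhere that bound is active. In the first alternative the mean curvature, and hence (together with $K \geq \e_0 > 0$, so that $\kappa_1^{-1} \leq H/\e_0$ pointwise) both principal curvatures, are bounded by a constant depending only on $\|g\|_{C^3}$, $\|\gbar\|_{C^3}$ and $\e_0$. In the second alternative one still gets a pointwise bound on $H$ since $k$ is bounded below by $\sqrt{\e_0}$, so $H \leq b_0/\sqrt{\e_0}$ wherever the Harnack alternative holds, and $H \leq a_0$ elsewhere; either way $H$ is uniformly bounded. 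Consequently $\two$ is bounded in $C^0$, and bootstrapping the Gauss--Codazzi system — equivalently, the elliptic estimate from the calibrated $J$-holomorphic curve picture of \cite{l} — upgrades $f$ to a $C^3$ (indeed smooth) isometric immersion, since the structure coefficients in the relevant elliptic equation lie in $C^1$ when $g, \gbar \in C^3$.

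I expect the main obstacle to be bookkeeping the regularity: one must check that with only $C^3$-data the Schwarz-type lemma and the bootstrapping of the Gauss--Codazzi equations genuinely close at the $C^3$-level for $f$, rather than requiring $C^4$ as in the classical Nirenberg/Heinz arguments. This is precisely where the Harnack estimate of Theorem~\ref{thm: weyl} — which only consumed one derivative of $K$, i.e. $C^3$-bounds on $g$ and $\gbar$ — does the work, replacing the a~priori $C^0$-bound on $H$ that previously needed $C^4$. The remaining verification is routine: once $H$ is bounded one feeds this into the first-order PDE \eqref{PDE, H} and its differentiated form, and standard Schauder/$L^p$ theory for the resulting elliptic system gives the claimed $C^3$-regularity.
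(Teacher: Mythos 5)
Your key observation — that under the strict curvature hypotheses the relative Gauss curvature $K$ is uniformly bounded below by some $\e_0 > 0$, so both branches of the dichotomy of Theorem~\ref{thm: weyl} yield a uniform bound on $H$ (namely $H \leq \max\{a_0,\, b_0/\sqrt{\e_0}\}$) — is correct and is essentially how the paper gets the mean-curvature estimate. However, your opening claim that ``no passage to the limit is needed'' introduces a genuine gap. You invoke Labourie's Th\'eor\`eme~A directly for the $C^3$-metrics $g$ and $\gbar$, and then assert via the Gromov--Schwarz lemma that the resulting immersion is ``$C^\infty$''. Neither step is justified: Labourie's existence theorem is applied in the paper only to \emph{smooth} metrics, and the whole reason Corollary~\ref{cor} has content with merely $C^3$-data (weaker than $C^4$ as in Nirenberg, or $C^{3,\alpha}$ as in Pogorelov and Lin) is precisely that one cannot run the existence machinery directly at $C^3$. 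Moreover an immersion of $C^3$-metrics into a $C^3$-ambient manifold cannot be $C^\infty$; the target regularity is $C^3$, not smooth.

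The paper's proof therefore still mollifies: one takes $C^\infty$-approximations of $g$ (and of $\gbar$ if necessary), noting that strict ellipticity is an open condition and so persists under small Lipschitz perturbations; applies Labourie's Th\'eor\`eme~A to each smooth approximant to produce smooth isometric immersions $f_\e$; uses the Harnack estimate of Theorem~\ref{thm: weyl} to bound $H_\e$ uniformly in $\e$, with constants depending only on $\|g\|_{C^3}$, $\|\gbar\|_{C^3}$ and the uniform ellipticity constant; and then passes to the limit via Arzel\`a--Ascoli, with the final $C^3$-regularity of the limiting immersion supplied by \emph{le lemme de Schwarz \`a la Gromov}. You correctly identified that the Harnack step is what replaces the $C^4$-requirement of the classical arguments, but you placed the remaining difficulty in the wrong spot: it is not in ``bootstrapping the Gauss--Codazzi system'' for a given $f$, but in the fact that no $f$ is available for $C^3$-data until one regularises. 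Restore the mollification-and-limit structure and the proof closes; your single-immersion shortcut does not.
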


\begin{proof}
	The same arguments for Theorem~\ref{thm: weyl} yield that $W \geq c>0$ on $\Sigma$ unless $W \equiv 0$; the latter is again impossible due to Lemma~\ref{lem: wyst}. Here $c$ is allowed to depend on lower bound of $k^2$, which is strictly positive. On the other hand, all the relevant estimates only involve up to three derivatives of $g$ and $\gbar$. This gives a uniform bound on the mean curvature. Here we are working in the case of strict ellipticity, which is preserved under mollifications. Thus,  we can pass to the limits to establish the limiting isometric immersion via Arzel\`{a}--Ascoli's theorem and deduce the regularity from le lemme de Schwarz \`{a} la Gromov (\cite{l}, 1.1 and 1.2).  \end{proof}

Note that we only need $g,\gbar \in C^3$ here, which is a weaker assumption than that in Nirenberg \cite{n} ($C^4$), and Pogorelov \cite{p} and Lin \cite{lin} ($C^{3,\alpha}$). By different approaches Heinz \cite{h} also proved for $g \in C^3$. The above works also assume that $(\M,\gbar)$ is a space form. Recently, Lu \cite{lu} proved for $g \in C^{2,{\rm Dini}}$ by refining the estimates in \cite{h}.

On the other hand, we bring to the attention of the readers to 
the following problem of Guan--Li, which is concerned with sufficient conditions for the existence  \emph{smooth} isometric immersions/embeddings  (\cite{gl}, p.333 Question 2):
\begin{quotation}\emph{What are the sufficient conditions $($even necessary and sufficient conditions$)$ on the metric with nonnegative Gauss curvature which give rise to a smooth isometric embedding into $(\R^3,\delta)$? $[\delta$ is the Euclidean metric.$]$}\end{quotation}
It remains open in view of the counterexamples by  Burago--Shefel' and Iaia \cite{bs,i}.

\noindent
{\bf Acknowledgement}. The author is deeply indebted to Prof.~Pengfei Guan for many enlightening discussions on isometric immersions and the Weyl problem, to Prof.~Fang-Hua Lin for sharing his insights and ideas on Pogorelov's results and communications on the paper \cite{lin}, and to Prof.~Gui-Qiang Chen for his continuous support and lasting interests in our research. 

\end{document}